\documentclass[12pt,reqno]{amsart}
\usepackage{amssymb,amscd,amsbsy,mathrsfs}
\newcommand{\lb}{\linebreak}

\renewcommand{\a}{\alpha}
\renewcommand{\b}{\beta}
\newcommand{\g}{\gamma}

\newcommand{\s}{\sigma}

\newcommand{\f}{\varphi}

\newcommand{\G}{\Gamma}

\newcommand{\h}{{\mathscr H}}

\newcommand{\R}{{\Bbb R}}
\newcommand{\Z}{{\Bbb Z}}

\newcommand{\0}{{\boldsymbol{0}}}

\newcommand{\bs}{\boldsymbol}

\newcommand{\bS}{{\boldsymbol S}}

\newcommand{\rf}[1]{(\ref{#1})}

\newcommand{\df}{\stackrel{\mathrm{def}}{=}}

\newcommand{\trace}{\operatorname{trace}}

\newcommand{\const}{\operatorname{const}}

\newcommand{\eeq}{\end{equation}}
\newcommand{\beq}{\begin{equation}}
\newcommand{\bay}{\begin{eqnarray}}
\newcommand{\ba}{\begin{align*}}
\newcommand{\ea}{\end{align*}}
\newcommand{\ey}{\end{eqnarray}}
\newcommand{\bey}{\begin{eqnarray*}}
\newcommand{\eey}{\end{eqnarray*}}

\newcommand{\be}{\infty}

\newcommand{\bl}{\blacksquare}

\newcommand{\Pf}{{\bf Proof. }}

\newtheorem{thm}{\hspace{\parindent}Theorem}[section]

\newtheorem{lem}[thm]{\hspace{\parindent}Lemma}

\usepackage{amsmath}

\makeatletter
\def\upintkern@{\mkern-7mu\mathchoice{\mkern-3.5mu}{}{}{}}
\def\upintdots@{\mathchoice{\mkern-4mu\@cdots\mkern-4mu}%
 {{\cdotp}\mkern1.5mu{\cdotp}\mkern1.5mu{\cdotp}}%
 {{\cdotp}\mkern1mu{\cdotp}\mkern1mu{\cdotp}}%
 {{\cdotp}\mkern1mu{\cdotp}\mkern1mu{\cdotp}}}

\newcommand{\UpMultiIntegral}[1]{%
  \edef\ints@c{\noexpand\upintop
    \ifnum#1=\z@\noexpand\upintdots@\else\noexpand\upintkern@\fi
    \ifnum#1>\tw@\noexpand\upintop\noexpand\upintkern@\fi
    \ifnum#1>\thr@@\noexpand\upintop\noexpand\upintkern@\fi
    \noexpand\upintop
    \noexpand\ilimits@
  }%
  \futurelet\@let@token\ints@a
}
\makeatother

\DeclareFontFamily{OMX}{mdbch}{}
\DeclareFontShape{OMX}{mdbch}{m}{n}{ <->s * [0.8]  mdbchr7v }{}
\DeclareFontShape{OMX}{mdbch}{b}{n}{ <->s * [0.8]  mdbchb7v }{}
\DeclareFontShape{OMX}{mdbch}{bx}{n}{<->ssub * mdbch/b/n}{}

\DeclareSymbolFont{uplargesymbols}{OMX}{mdbch}{m}{n}
\SetSymbolFont{uplargesymbols}{bold}{OMX}{mdbch}{b}{n}
\DeclareMathSymbol{\upintop}{\mathop}{uplargesymbols}{82}
\DeclareMathSymbol{\upointop}{\mathop}{uplargesymbols}{"48}

\DeclareFontEncoding{MDB}{}{}
\DeclareFontFamily{MDB}{mdbch}{}
\DeclareFontShape{MDB}{mdbch}{m}{n}{ <->s * [0.8]  mdbchrmb }{}
\DeclareFontShape{MDB}{mdbch}{b}{n}{ <->s * [0.8]  mdbchbmb }{}
\DeclareFontShape{MDB}{mdbch}{bx}{n}{<->ssub * mdbch/b/n}{}
\DeclareFontSubstitution{MDB}{cmr}{m}{n}
\DeclareSymbolFont{mathdesignB}{MDB}{mdbch}{m}{n}%
\SetSymbolFont{mathdesignB}{bold}{MDB}{mdbch}{b}{n}%
\DeclareMathSymbol{\upintclockwise}{\mathop}{mathdesignB}{128}
\DeclareMathSymbol{\upointclockwise}{\mathop}{mathdesignB}{130}
\DeclareMathSymbol{\upointctrclockwise}{\mathop}{mathdesignB}{132}
\DeclareMathSymbol{\upoiint}{\mathop}{mathdesignB}{134}
\DeclareMathSymbol{\upoiiint}{\mathop}{mathdesignB}{136}

\makeatletter
\newcommand{\upint}{\DOTSI\upintop\ilimits@}
\newcommand{\upoint}{\DOTSI\upointop\ilimits@}
\makeatother

\theoremstyle{remark}

\newtheorem*{rem*}{Remark}

\newcommand\fM{\frak M}

\newcommand\dg{\frak D}

\newcommand\mB{\mathcal{B}}

\newcommand\rd{{\rm d}}

\begin{document}

\numberwithin{equation}{section}

\numberwithin{equation}{section}

\title{Commutator estimates for functions of noncommuting self-adjoint operators}
\author{V.V. Peller}
\thanks{The research on \S\:2-3  is supported by 
Russian Science Foundation [grant number 23-11-00153].
The research on \S\:4 is supported by a grant of the Government of the Russian Federation for the state support of scientific research, carried out under the supervision of leading scientists, agreement  075-15-2025-013.\newline
{\it MSC2020:} primary 47A60; secondary 47B15, 47B10, 47B47.\newline
{\it Keywords:} functional calculus, self-adjoint operators, commutators, Schatten--von Neumann classes, double operator integrals, triple operator integrals, Haagerup tensor products, Haagerup-like tensor products.}


\

%
\maketitle

\begin{abstract}
We study properties of the calculus $\f\mapsto\f(A,B)$ for self-adjoint operators with commutator
$[A,B]$ in the Schaten--von Neumann class $\bS_p$. It turns out that this calculus defined on the Besov class $B_{\be,1}^1(\R^2)$ in the case $p\le2$ admits a commutator Lipschitz estimate and is multiplicative modulo $\bS_p$. On the other hand in the case $p>2$ there is no commutator Lipschitz estimate. There is no commutator Lipschitz estimate in the operator norm as well.
\end{abstract}

\section{\bf Introduction}
\setcounter{equation}{0}
\label{In}

\

In this paper we continue studying properties of the calculus $\f\mapsto \f(A,B)$ for not necessarily commuting self-adjoint operators $A$ and $B$. This study was initiated in \cite{ANP}. For a function $\f$ on $\R^2$, the function
$\f(A,B)$ of self-adjoint operators $A$ and $B$ is defined as the double operator integral
$$
\f(A,B)=\iint_{\R^2}\f(x,y)\,\rd E_A(x)\,\rd E_B(y),
$$
where $E_A$ and $E_B$ are the spectral measures of $A$ and $B$.

Recall that the theory of double operator integrals of the form
\bay
\label{dvoinoi}
\iint \Phi(x,y)\,\rd E_1(x)Q\,\rd E_2(y)
\ey
was developed by M.Sh.~Birman and M.Z.~Solomyak in \cite{BS1}--\cite{BS3}. Here $E_1$ and $E_2$ are spectral measures on Hilbert space, $\Phi$ is a bounded measurable function and $Q$ is a bounded linear operator on Hilbert space. We also refer the reader to \cite{AP1}, \cite{Pe3} for more information on double operator integrals.

The maximal class of functions $\Phi$ for which the double operator integral \rf{dvoinoi} can be defined as a bounded linear operator for an arbitrary bounded operator $Q$ is called the class of {\it Schur multipliers} with respect to $E_1$ and $E_2$. This class is denoted by $\fM_{E_1,E_2}$. 

It is well known (see \cite{Pe1}, \cite{AP1} and \cite{AP4}) that $\Phi\in\fM_{E_1,E_2}$ if and only if $\Phi$ belongs to the {\it Haagerup tensor product} $L^\be_{E_1}\otimes_{\rm h}L^\be_{E_1}$
of the spaces $L^\be_{E_1}$ and $L^\be_{E_2}$, i.e. $\Phi$ admits a representation
$$
\Phi(x,y)=\sum_nu_n(x)v_n(y),
$$
with functions $u_n$ in $L^\be_{E_1}$ and $v_n$ in $L^\be_{E_2}$ satisfying the condition
$$
\sum_n|u_n|^2\in L^\be_{E_1}\quad\mbox{and}\quad\sum_n|v_n|^2\in L^\be_{E_2}.
$$

It was observed in \cite{ANP} that if $\f$ is a function on $\R^2$ that belongs to the homogeneous Besov class $B_{\be,1}^1(\R^2)$, then $\f\in L^\be_{E_A}\otimes_{\rm h}L^\be_{E_B}$ for arbitrary bounded self-adjoint operators $A$ and $B$ which allows one to define the functional calculus
$$
\f\mapsto\f(A,B),\quad \f\in B_{\be,1}^1(\R^2).
$$
We refer the reader to \cite{AP1} and \cite{Pe4} for the definition and basic properties of Besov spaces.

It was shown in \cite{ANP} that this functional calculus satisfies a Lipschitz type estimate in the Schatten--von Neumann norm of $\bS_p$ for $p\in[1,2]$, i.e. the following inequality holds
\bay
\label{LipSp}
\!\|\f(A_1,B_1)-\f(A_2,B_2)\|_{\bS_p}
\le\const\|\f\|_{B_{\be,1}^1}\!\max\{\|A_2-A_1\|_{\bS_p},\|B_2-B_1\|_{\bS_p}\}
\ey
whenever $A_1$, $B_1$, $A_2$ and $B_2$ are bounded self-adjoint operators such that
$A_2-A_1\in\bS_p$ and $B_2-B_1\in\bS_p$. We refer the reader to \cite{GK} for the definition and basic properties of Schatten--von Neumann classes.

On the other hand, it was proved in \cite{ANP} that inequality \rf{LipSp} is false in the norm of 
$\bS_p$ for $p>2$ and it is also false in the operator norm.

In \cite{AP2} the authors considered the functional calculus $\f\mapsto\f(A,B)$
for pairs of almost commuting bounded self-adjoint operators $A$ and $B$. Recall that $A$ and $B$ are called {\it almost commuting} if the commutator $[A,B]\df AB-BA$ belongs to the trace class $\bS_1$. It was established in \cite{AP2} that for almost commuting bounded self-adjoint operators $A$ and $B$, this calculus is almost multiplicative on the
the homogeneous Besov class $B_{\be,1}^1(\R^2)$, i.e.
$$
\f(A,B)\psi(A,B)-\psi(A,B)\f(A,B)\in\bS_1
$$
whenever $\f$ and $\psi$ belong to $B_{\be,1}^1(\R^2)$. Moreover, in \cite{AP2} the Helton--Howe trace formula (see \cite{HH}) was extended to the homogeneous Besov class 
$B_{\be,1}^1(\R^2)$:
\begin{align*}
\label{exHH}
\trace\big({\rm i}\big(\f(A,B)&\psi(A,B)-\psi(A,B)\f(A,B)\big)\big)\nonumber\\[.2cm]
=&\frac{1}{2\pi}
\iint_{\R^2}\left(\frac{\partial\f}{\partial x}\frac{\partial\psi}{\partial y}-
\frac{\partial\f}{\partial y}\frac{\partial\psi}{\partial x}\right)g(x,y)\,dx\,dy,\quad\f,\,\psi\in
B_{\be,1}^1(\R^2),
\end{align*}
where $g$ is the {\it Pincus principal function} introduced in \cite{Pi} that is associated with the almost commuting self-adjoint operators $A$ and $B$.

In this paper we consider the functional calculus $\f\mapsto\f(A,B)$, $\f\in B_{\be,1}^1(\R^2)$ for bounded self-adjoint operators $A$ and $B$ with commutators in the Schatten--von Neumann class $\bS_p$. 

\

\section{\bf Triple operator integrals}
\setcounter{equation}{0}
\label{triple}

\

Both in \cite{ANP} and \cite{AP2} an important role was played by {\it triple operator integrals}, i.e. 
expressions of the form
\bay
\label{tri}
\iiint \Phi(x,y,z)\,\rd E_1(x)Q_1\,\rd E_2(y)Q_2\,\rd E_3(z).
\ey
Here $E_1$, $E_2$ and $E_3$ are spectral measures on Hilbert space, $Q_1$ and $Q_2$ are bounded linear operators on Hilbert space and $\Phi$ is a bounded measurable function. We call $\Phi$ the {\it symbol} of the triple operator integral \rf{tri}.

As in the case of double operator integrals, for such triple operator integrals to make sense the function $\Phi$ and the operators $Q_1$ and $Q_2$ have to satisfy certain conditions.

We mention here several approaches to triple operator integrals.

In \cite{Pe2} triple operator integrals of the form \rf{tri} were defined for functions $\Phi$ in the integral projective tensor product 
$L^\be_{E_1}\hat\otimes_{\rm i}L^\be_{E_2}\hat\otimes_{\rm i}L^\be_{E_3}$, see \cite{Pe2}
for the definition. It turns out that if
$\Phi\in L^\be_{E_1}\hat\otimes_{\rm i}L^\be_{E_2}\hat\otimes_{\rm i}L^\be_{E_3}$, $Q_1$ and $Q_2$ are bounded operators, then the triple operator integral in \rf{tri} determines a bounded linear operator. Also, for $\Phi\in L^\be_{E_1}\hat\otimes_{\rm i}L^\be_{E_2}\hat\otimes_{\rm i}L^\be_{E_3}$, the conditions that $Q_1\in\bS_p$ and $Q_2\in\bS_q$ with $1/p+1/q\le1$ ensure that the triple operator integral in \rf{tri} belongs to the Schatten--von Neumann class $\bS_r$, where $1/r=1/p+1/q$.

We dwell on the next two approaches in more detail. In \cite{JTT} the authors considered triple operator integrals of the form \rf{tri} in the case when $\Phi$ belongs to the {\it Haagerup tensor product} $L^\be_{E_1}\otimes_{\rm h}L^\be_{E_2}\otimes_{\rm h}L^\be_{E_3}$ of the $L^\infty$ spaces which consists of functions $\Phi$ of the form
$$
\Phi(x,y,z)=\sum_{j,k\ge0}\a_j(x_1)\b_{jk}(x_2)\g_k(x_3)
$$
where $\{\a_j\}_{j\ge0},~\{\g_k\}_{k\ge0}\in L^\be(\ell^2)$, 
$\{\b_{jk}\}_{j,k\ge0}\in L^\be(\mB)$, where $\mB$ is the {\it space of matrices that induce bounded linear operators on} $\ell^2$ equipped with the operator norm. For such functions $\Phi$ the triple operator integral in \rf{tri} can be expressed in the following way:
\begin{multline*}
\iiint \Phi(x,y,z)\,\rd E_1(x)Q_1\,\rd E_2(y)Q_2\,\rd E_3(z)\\[.2cm]
=\sum_{j,k\ge0}\left(\int\a_j\,\rd E_1\right)Q_1\left(\int\b_{jk}\,\rd E_2\right)
Q_2\left(\int\g_k\,\rd E_3\right)\nonumber\\[.2cm]
=\lim_{M,N\to\be}~\sum_{j=0}^N\sum_{k=0}^M
\left(\int\a_j\,\rd E_1\right)Q_1\left(\int\b_{jk}\,\rd E_2\right)
Q_2\left(\int\g_k\,\rd E_3\right),
\end{multline*}
where the limit on the right exists in the weak operator topology. Also, the triple operator integral
in \rf{tri} can be expressed as the product of the block operator matrices
\begin{multline*}
\iiint \Phi(x,y,z)\,\rd E_1(x)Q_1\,\rd E_2(y)Q_2\,\rd E_3(z)\\[.2cm]
=\left(\begin{matrix}
A_1&A_2&A_3&\cdots
\end{matrix}\right)
\left(\begin{matrix}
B_{11}&B_{12}&B_{13}&\cdots\\[.2cm]
B_{21}&B_{22}&B_{23}&\cdots\\[.2cm]
B_{31}&B_{32}&B_{33}&\cdots\\[.2cm]
\vdots&\vdots&\vdots&\ddots
\end{matrix}\right)
\left(\begin{matrix}
\G_1\\[.2cm]
\G_2\\[.2cm]
\G_3\\[.2cm]
\vdots
\end{matrix}\right),
\end{multline*}
where 
$$
A_j\df\left(\int\a_j\,\rd E_1\right)Q_1,\quad B_{jk}\df\int\b_{jk}\,\rd E_2
\quad\mbox{and}\quad\G_k\df Q_2\int\g_{k}\,\rd E_3.
$$
All three block operator matrices on the right-hand side determine bounded linear operators, see \cite{AP3}.

It was established in \cite{AP3} that if $\Phi\in L^\be_{E_1}\otimes_{\rm h}L^\be_{E_2}\otimes_{\rm h}L^\be_{E_3}$, $Q_1\in\bS_p$, $Q_2\in\bS_q$ with $p,q\in[2,\be]$, then the triple operator integral in \rf{tri} belongs to $\bS_r$, where $1/r=1/p+1/q$. Here and in what follows for convenience, by $T\in\bS_\be$ we mean that $T$ is a bounded linear operator.

We proceed now to the most important approach for us to triple operator integrals. This approach was offered in \cite{ANP} and was slightly refined in \cite{AP3}. It allows us to define triple operator integrals of the form \rf{tri} for functions $\Phi$ in Haagerup-like tensor products of $L^\be$ spaces.

We say that a measurable function $\Phi$ of three variables {\it belongs to the Haagerup-like tensor product 
$L^\be(E_1)\!\otimes_{\rm h}\!L^\be(E_2)\!\otimes^{\rm h}\!L^\be(E_3)$ of the first kind} if it admits a representation
\bay
\label{yaH}
\Phi(x,y,z)=\sum_{j,k\ge0}\a_j(x)\b_{k}(y)\g_{jk}(z)
\ey
with $\{\a_j\}_{j\ge0},~\{\b_k\}_{k\ge0}\in L^\be(\ell^2)$ and 
$\{\g_{jk}\}_{j,k\ge0}\in L^\be(\mB)$. We equip the space \lb$L^\be(E_1)\!\otimes_{\rm h}\!L^\be(E_2)\!\otimes^{\rm h}\!L^\be(E_3)$ with the norm
$$
\|\Phi\|_{L^\be\otimes_{\rm h}\!L^\be\otimes^{\rm h}\!L^\be}
\df\inf\big\|\{\a_j\}_{j\ge0}\big\|_{L^\be(\ell^2)}
\big\|\{\b_k\}_{k\ge0}\big\|_{L^\be(\ell^2)}
\big\|\{\g_{jk}\}_{j,k\ge0}\big\|_{L^\be(\mB)},
$$
the infimum being taken over all representations of the form \rf{yaH}.

Let us now define {\it triple operator integrals of the first gender} with integrand $\Phi$ in the tensor product
$L^\be(E_1)\!\otimes_{\rm h}\!L^\be(E_2)\!\otimes^{\rm h}\!L^\be(E_3)$.

Suppose that $1\le p\le2$. For 
$\Phi\in L^\be(E_1)\!\otimes_{\rm h}\!L^\be(E_2)\!\otimes^{\rm h}\!L^\be(E_3)$, for a bounded linear operator $T$, and for an operator $R$ of class $\bS_p$, we define the triple operator integral
$$
W=\iint\!\!\upint\Phi(x,y,z)\,dE_1(x)T\,dE_2(y)R\,dE_3(z)
$$
{\it of the first gender}
as the following continuous linear functional on $\bS_{p'}$,
$1/p+1/p'=1$ (on the class of compact operators in the case $p=1$):
$$
Q\mapsto
\trace\left(\left(
\iiint
\Phi(x,y,z)\,dE_2(y)R\,dE_3(z)Q\,dE_1(x)
\right)T\right).
$$
It follows easily from the above Schatten--von Neumann properties of triple operator integrals with symbols in the Haagerup tensor product $L^\be\otimes_{\rm h}L^\be\otimes_{\rm h}L^\be$
that $W\in\bS_p$ and
\bay
\label{perrod}
\|W\|_{\bS_p}\le\|\Psi\|_{L^\be\otimes_{\rm h}\!L^\be\otimes^{\rm h}\!L^\be}
\|T\|_{\bS_p}\|R\|,\quad1\le p\le2,
\ey
see \cite{ANP} and \cite{AP4}. Note that a considerably more general result on Schatten--von Neumann properties of triple operator integrals of the first gender is given in Theorem 5.1 of \cite{AP4}.

Let us proceed now to Haagerup tensor products of the second kind and triple operator integrals of the second gender.

We say that a function $\Phi$ belongs to the {\it Haagerup-like tensor product \lb$L^\be(E_1)\!\otimes^{\rm h}\!L^\be(E_2)\!\otimes_{\rm h}\!L^\be(E_3)$
of the second kind} if
$\Phi$ admits a representation
\bay
\label{preds}
\Phi(x,y,z)=\sum_{j,k\ge0}\a_{jk}(x)\b_{j}(y)\g_k(z)
\ey
where $\{\b_j\}_{j\ge0},~\{\g_k\}_{k\ge0}\in L^\be(\ell^2)$, 
$\{\a_{jk}\}_{j,k\ge0}\in L^\be(\mB)$. The norm of $\Phi$ in 
the space $L^\be\otimes^{\rm h}\!L^\be\otimes_{\rm h}\!L^\be$ is defined by
$$
\|\Phi\|_{L^\be\otimes^{\rm h}\!L^\be\otimes_{\rm h}\!L^\be}
\df\inf\big\|\{\a_{jk}\}_{j,k\ge0}\big\|_{L^\be(\mB)}
\big\|\{\b_j\}_{j\ge0}\big\|_{L^\be(\ell^2)}
\big\|\{\g_{k}\}_{k\ge0}\big\|_{L^\be(\ell^2)},
$$
the infimum being taken over all representations of the form \rf{preds}.

We proceed now to the definition of {\it triple operator integrals of the second gender} with integrand in the tensor product
$L^\be(E_1)\!\otimes^{\rm h}\!L^\be(E_2)\!\otimes_{\rm h}\!L^\be(E_3)$.

Suppose that 
$\Phi\in L^\be(E_1)\!\otimes^{\rm h}\!L^\be(E_2)\!\otimes_{\rm h}\!L^\be(E_3)$,
$T$ is a bounded linear operator, and $R\in\bS_p$, $1\le p\le2$. The continuous linear functional 
$$
Q\mapsto
\trace\left(\left(
\iiint\Phi(x,y,z)\,dE_3(z)Q\,dE_1(x)T\,dE_2(y)
\right)R\right)
$$
on the class $\bS_{p'}$ (on the class of compact operators in the case $p=1$) 
determines an operator $W$ of class $\bS_p$, which
we call the triple operator integral
$$
W=\upint\!\!\!\iint\Phi(x,y,z)\,dE_1(x)T\,dE_2(y)R\,dE_3(z)
$$
{\it of the second gender}.

It is easy to see that
\bay
\label{vtorod}
\|W\|_{\bS_p}\le
\|\Psi\|_{L^\be\otimes^{\rm h}\!L^\be\otimes_{\rm h}\!L^\be}
\|T\|\cdot\|R\|_{\bS_p},
\ey
see \cite{ANP} and \cite{AP4}.

A considerably more general result on Schatten--von Neumann properties of triple operator integrals of the second gender is given in Theorem 5.2 of \cite{AP4}.

\

\section{\bf Commutator Lipschitz estimates in $\bS_p$ in the case $\bs{p\in[1,2]}$}
\setcounter{equation}{0}
\label{Gosha}

\

For a differentiable function $\f$ on $\R^2$, we consider the divided differences
$\dg^{[1]}\f$ and $\dg^{[2]}\f$ on $\R^3$ by
$$
\big(\dg^{[1]}\f)(x_1,x_2,y\big)\df
\left\{\begin{array}{ll}
\frac{\f(x_1,y)-\f(x_2,y)}{x_1-x_2},&x_1\ne x_2,\\[.2cm]
\frac{\partial\f}{\partial x}(x_1,y),&x_1=x_2.
\end{array}
\right.
$$
and
$$
\big(\dg^{[2]}\f\big)(x,y_1,y_2)\df
\left\{\begin{array}{ll}
\frac{\f(x,y_1)-\f(x,y_2)}{y_1-y_2},&y_1\ne y_2,\\[.2cm]
\frac{\partial\f}{\partial y}(x_1,y),&y_1=y_2.
\end{array}
\right.
$$

It was shown in \cite{ANP} that if $\f$ is a bounded function on $\R^2$ whose Fourier transform is supported 
in the ball $\{\xi\in\R^2:~\|\xi\|\le1\}$, then
$$
\big(\dg^{[1]}\f)(x_1,x_2,y\big)=
\sum_{j,k\in\Z}\frac{\sin(x_1-j\pi)}{x_1-j\pi}\cdot\frac{\sin(x_2-k\pi)}{x_2-k\pi}
\cdot\frac{\f(j\pi,y)-\f(k\pi,y)}{j\pi-k\pi}.
$$
For $j=k$, it is assumed that 
$$
\frac{\f(j\pi,y)-\f(k\pi,y)}{j\pi-k\pi}
=\frac{\partial\f(x,y)}{\partial x}\Big|_{(j\pi,y)}.
$$
Moreover,
$$
\sup_{y\in\R}\left\|\left\{\frac{\f(j\pi,y)-\f(k\pi,y)}{j\pi-k\pi}
\right\}_{j,k\in\Z}\right\|_\mB\le\const\|\f\|_{L^\be(\R^2)},
$$
see \cite{ANP}.

Since it is well known that
$$
\sum_{j\in\Z}\frac{\sin^2(x_1-j\pi)}{(x_1-j\pi)^2}
=\sum_{k\in\Z}\frac{\sin^2(x_2-k\pi)}{(x_2-k\pi)^2}=1,
\quad x_1~x_2\in\R,
$$
it follows that for such functions $\f$,
$$
\dg^{[1]}\f\in L^\be\otimes_{\rm h}\!L^\be\otimes^{\rm h}\!L^\be\quad
\mbox{and}\quad
\|\dg^{[1]}\f\|_{L^\be\otimes_{\rm h}\!L^\be\otimes^{\rm h}\!L^\be}
\le\const\|\f\|_{L^\be}.
$$
By rescaling, one can deduce from this that for bounded functions $\f$ on
$\R^2$ whose Fourier transform is supported in $\{\xi\in\R^2:~\|\xi\|\le\s\}$, we have
$$
\dg^{[1]}f\in L^\be\otimes_{\rm h}\!L^\be\otimes^{\rm h}\!L^\be\quad
\mbox{and}\quad
\|\dg^{[1]}\f\|_{L^\be\otimes_{\rm h}\!L^\be\otimes^{\rm h}\!L^\be}
\le\const\s\|\f\|_{L^\be},
$$
see \cite{ANP}.

It follows easily that if $\f$ is a function in the Besov class $B_{\be,1}^1(\R^2)$, then
$$
\dg^{[1]}\f\in L^\be\otimes_{\rm h}\!L^\be\otimes^{\rm h}\!L^\be\quad
\mbox{and}\quad
\big\|\dg^{[1]}\f\big\|_{L^\be\otimes_{\rm h}\!L^\be\otimes^{\rm h}\!L^\be}
\le\const\|\f\|_{B_{\be,1}^1}.
$$

Similarly, for $\f\in B_{\be,1}^1(\R^2)$,
$$
\dg^{[2]}\f\in L^\be\otimes^{\rm h}\!L^\be\otimes_{\rm h}\!L^\be
\quad\mbox{and}\quad
\big\|\dg^{[2]}\f\big\|_{L^\be\otimes^{\rm h}\!L^\be\otimes_{\rm h}\!L^\be}
\le\const\|\f\|_{B_{\be,1}^1},
$$
see \cite{ANP}.

The following theorem says that our calculus $\f\mapsto\f(A,B)$ admits a commutator Lipschitz
estimate in $\bS_p$ with $p\in[1.2]$; it generalizes Theorem 4.1 of \cite{AP2} from the case $p=1$.

\begin{thm}
\label{komutp}
Let $1\le p\le2$. Suppose that
$A$ and $B$ are self-adjoint operators and $Q$ is a bounded linear operator such that $[A,Q]\in\bS_p$ and $[B,Q]\in\bS_p$. 
Let $\f\in B_{\be,1}^1(\R^2)$.
Then $[\f(A,B),Q\big]\in\bS_p$,
\begin{align*}
\big[\f(A,B),Q\big]&=
\upint\!\!\!\iint\frac{\f(x,y_1)-\f(x,y_2)}{y_1-y_2}\,dE_A(x)\,dE_B(y_1)[B,Q]\,dE_B(y_2)\nonumber
\\[.2cm]
&+
\iint\!\!\upint\frac{\f(x_1,y)-\f(x_2,y)}{x_1-x_2}\,dE_A(x_1)[A,Q]\,dE_A(x_2)\,dE_B(y)
\end{align*}
and
$$
\big\|[\f(A,B),Q\big]\big\|_{\bS_p}
\le\const\|\f\|_{B_{\be,1}^1(\R^2)}\big(\big\|[A,Q]\big\|_{\bS_p}+
\big\|[B,Q]\big\|_{\bS_p}\big).
$$
\end{thm}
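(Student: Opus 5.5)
We split the commutator as
\[
[\f(A,B),Q]=\f(A,B)Q-Q\f(A,B)=\big(\f(A,B)Q-\Xi\big)+\big(\Xi-Q\f(A,B)\big),
\]
where $\Xi$ is an intermediate term in which $Q$ sits between $dE_A$ and $dE_B$. Formally $\Xi=\iint\f(x,y)\,dE_A(x)Q\,dE_B(y)$, though $\f$ need not be a Schur multiplier with respect to $E_A,E_B$ with $Q$ in the middle, so this needs care. Formally,
\[
\f(A,B)Q-\Xi=\iiint\f(x,y_1)\,dE_A(x)\,dE_B(y_1)\bigl(Q\,dE_B(y_2)-dE_B(y_2)Q\bigr),
\]
which becomes the second-gender integral with $\dg^{[2]}\f$ and $[B,Q]$ after the standard identity $\iint (y_1-y_2)^{-1}(\cdots)\,dE_B(y_1)[B,Q]\,dE_B(y_2)$. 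Similarly, $\Xi-Q\f(A,B)$ becomes the first-gender integral with $\dg^{[1]}\f$ and $[A,Q]$. The norm estimate then follows at once from \rf{perrod} and \rf{vtorod}, together with the bounds $\|\dg^{[1]}\f\|_{L^\be\otimes_{\rm h}L^\be\otimes^{\rm h}L^\be}\le\const\|\f\|_{B^1_{\be,1}}$ and $\|\dg^{[2]}\f\|_{L^\be\otimes^{\rm h}L^\be\otimes_{\rm h}L^\be}\le\const\|\f\|_{B^1_{\be,1}}$ established above.

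To make this rigorous, I would avoid $\Xi$ and verify the formula by duality against $\bS_{p'}$ (compact operators if $p=1$), first for nice $\f$. Step 1: reduce to $\f$ with $\operatorname{supp}\hat\f\subset\{\|\xi\|\le\s\}$ by the Littlewood--Paley decomposition $\f=\sum_n\f_n$. Both sides are linear in $\f$, and the right-hand side converges absolutely in $\bS_p$ since $\sum\s_n\|\f_n\|_{L^\be}\asymp\|\f\|_{B^1_{\be,1}}$. The left-hand side converges since $\f(A,B)=\sum\f_n(A,B)$ in the operator norm, because $\|\f_n(A,B)\|\le\const\|\f_n\|_{L^\be}$ via the Haagerup representation and the spectra are bounded. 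Modulo a linear term, which is harmless since $[A,Q]$ and $[B,Q]$ are treated directly, this handles the polynomial ambiguity of the homogeneous class. Step 2: for such $\f_n$, use the explicit representation $\f(x,y)=\sum_j u_j(x)v_j(y)$ from \cite{ANP}, with sinc-type functions of $x$, so that $\f(A,B)=\sum_j u_j(A)v_j(B)$. Then
\[
[\f(A,B),Q]=\sum_j u_j(A)[v_j(B),Q]+\sum_j[u_j(A),Q]v_j(B).
\]
Each one-variable commutator is given by the classical double operator integral formulas $[v(B),Q]=\iint\frac{v(y_1)-v(y_2)}{y_1-y_2}\,dE_B(y_1)[B,Q]\,dE_B(y_2)$, and analogously for $A$.

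Step 3: resum over $j$ and identify the resulting sums with the triple operator integrals of the two genders. Apply the defining trace functional to a test operator and check that $\sum_j\trace\big(u_j(A)(\cdots)\big)$ equals $\trace$ of the integral in the definition, using the Haagerup-like representations of $\dg^{[2]}\f$, $\dg^{[1]}\f$ coming from the sinc expansions. I expect this identification step to be the main obstacle: the convergence of the sums over $j$ and the placement of $T$ and $R$ in the definitions of the genders (functional on $\bS_{p'}$ with cyclic trace rearrangements) must be matched carefully. I would first verify it for $\f$ a finite sum of products $u(x)v(y)$, where it is a direct trace computation. I would then pass to the limit using the weak continuity of the Haagerup-like integrals in the symbol and the uniform bounds \rf{perrod}, \rf{vtorod}.
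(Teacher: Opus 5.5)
Your architecture is sound, and the norm estimate is obtained exactly as the paper intends. The paper gives no new argument for the identity: it reruns the $p=1$ proof of Theorem 4.1 of \cite{AP2}, and the only $p$-dependent inputs are \rf{perrod}, \rf{vtorod} and the Haagerup-like bounds for $\dg^{[1]}\f$, $\dg^{[2]}\f$. Your reason for avoiding $\Xi$ is, however, mistaken. For bounded $A,B$ and $\f\in B^1_{\be,1}(\R^2)$ one has $\f\in L^\be_{E_A}\otimes_{\rm h}L^\be_{E_B}=\fM_{E_A,E_B}$; this is how $\f(A,B)$ is defined in the first place. A Schur multiplier acts on every bounded $Q$ placed in the middle, and indeed your middle sum $\sum_j u_j(A)Qv_j(B)$ is $\Xi$. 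So the splitting $[\f(A,B),Q]=(\f(A,B)Q-\Xi)+(\Xi-Q\f(A,B))$ is already rigorous. What remains is to identify the two brackets with the triple operator integrals of the second and first gender.

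That identification is where there is a genuine gap. The ``weak continuity of the Haagerup-like integrals in the symbol'' you plan to invoke is false in the generality suggested. Take $A=B$ with simple Lebesgue spectrum on $[0,1]$ and $\Phi_N(x,y_1,y_2)=\frac1N\sum_{k=1}^N e^{{\rm i}k(x-y_1)}$. Using $\a_{k0}(x)=e^{{\rm i}kx}/\sqrt N$, $\b_k(y_1)=e^{-{\rm i}ky_1}/\sqrt N$, $\g_0=1$, its norm in $L^\be\otimes^{\rm h}L^\be\otimes_{\rm h}L^\be$ is at most $1$. It tends pointwise to the indicator of $\{x=y_1\}$, which is a null set since the spectral measure has no atoms. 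Yet the second-gender integral with $T=I$ (exactly the configuration of the first term in the theorem) equals $R$ for every $N$. So pointwise convergence plus uniform bounds is not enough, and you must supply two things.

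\textbf{(i) Uniform bounds.} You need $\|\dg^{[2]}\f_N\|_{L^\be\otimes^{\rm h}L^\be\otimes_{\rm h}L^\be}$ and $\|\dg^{[1]}\f_N\|_{L^\be\otimes_{\rm h}L^\be\otimes^{\rm h}L^\be}$ bounded uniformly in $N$ for the partial sums $\f_N$. These do not come from \rf{perrod}, \rf{vtorod}. They come from the fact that, by Cauchy--Schwarz, the partial sums inherit uniformly the band-limited structure to which the matrix estimate of \cite{ANP} applies.

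\textbf{(ii) A fixed frame.} Expand all $\dg^{[2]}\f_N$ in the same sinc functions of $y_1,y_2$, and all $\dg^{[1]}\f_N$ in the same functions of $x_1,x_2$. Then only the matrix coefficients, which are functions of one variable, depend on $N$. They converge pointwise and boundedly, so the block matrices $\{\a^N_{kl}(A)\}$ converge in the weak operator topology by the spectral theorem. This gives convergence of the defining trace functionals on finite-rank test operators. With the uniform $\bS_p$ bounds it gives weak convergence (weak-$*$ if $p=1$), while the corresponding partial sums on the left converge strongly to $\f(A,B)Q-\Xi$, resp. $\Xi-Q\f(A,B)$.

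Two smaller repairs are needed.
\begin{enumerate}
\item If your sinc-type representation is plain sampling $\f(x,y)=\sum_j\frac{\sin(x-j\pi)}{x-j\pi}\f(j\pi,y)$, it fails for bounded functions at the critical bandwidth (try $\f(x,y)=\sin x$). Use instead $\f(x,y)=\f(0,y)+x\,(\dg^{[1]}\f)(x,0,y)$ together with the sinc expansion of $\dg^{[1]}\f$.
\item In Step 1 the bound $\|\f_n(A,B)\|\le\const\|\f_n\|_{L^\be}$ is not summable as $n\to-\infty$. Normalize by $\f_n(0,0)$ and use $\|\f_n(A,B)-\f_n(0,0)I\|\le\const\,\s_n\|\f_n\|_{L^\be}(\|A\|+\|B\|)$; commutators do not see the constants.
\end{enumerate}
With these additions your route becomes a self-contained proof, which the paper sidesteps by appealing to the $p=1$ argument.
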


The proof of Theorem \ref{komutp} can be obtained by following the line of the proof of Theorem 4.1 of \cite{AP2} and applying inequalities \rf{perrod} and \rf{vtorod}.

The following theorem is a generalization of Theorem 4.2 of \cite{AP2}. It shows that in the case 
$[A,B]\in\bS_p$, $1\le p\le2$, for self-adjoint operators $A$ and $B$ our calculus
$\f\mapsto\f(A,B)$, $f\in B_{\be,1}^1(\R^2)$ is multiplicative modulo $\bS_p$.

\begin{thm}
\label{glavp}
Suppose that $1\le p\le2$.
Let $A$ and $B$ be self-adjoint operators such that $[A,B]\in\bS_p$ and let $\f$ and $\psi$ be functions in the Besov class $B_{\be,1}^1(\R^2)$. Then
\begin{align*}
\big[\f(A,B),\psi(A,B)\big]\!&=\!
\upint\!\!\!\iint
\frac{\f(x,y_1)-\f(x,y_2)}{y_1-y_2}\,dE_A(x)\,dE_B(y_1)[B,\psi(A,B)]\,dE_B(y_2)
\\[.2cm]
&+\!\!
\iint\!\!\upint\!
\frac{\f(x_1,y)-\f(x_2,y)}{x_1-x_2}dE_A(x_1)[A,\psi(A,B)]dE_A(x_2)dE_B(y)
\end{align*}
and
$$
\label{ner}
\big\|[\f(A,B),\psi(A,B)\big]\big\|_{\bS_p}
\le\const\|\f\|_{B_{\be,1}^1(\R^2)}\|\psi\|_{B_{\be,1}^1(\R^2)}
\big\|[A,B]\big\|_{\bS_p}.
$$
\end{thm}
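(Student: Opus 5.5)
Theorem~\ref{glavp} follows from Theorem~\ref{komutp} applied with $Q=\psi(A,B)$, combined with a separate estimate of the commutators $[A,\psi(A,B)]$ and $[B,\psi(A,B)]$ in $\bS_p$.

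First we check the hypotheses of Theorem~\ref{komutp} for $Q=\psi(A,B)$. The operator $\psi(A,B)$ is bounded, since $\psi\in L^\be_{E_A}\otimes_{\rm h}L^\be_{E_B}$. To estimate $[A,\psi(A,B)]$ we apply Theorem~\ref{komutp} once more, but with the roles reversed. We take the function $\psi$ and the bounded operator $Q=A$ itself (assume for the moment that $A$ and $B$ are bounded; the general case is discussed below). Then $[A,A]=0$ and $[B,A]=-[A,B]\in\bS_p$, so Theorem~\ref{komutp} gives $[\psi(A,B),A]\in\bS_p$ with
$$
\big\|[A,\psi(A,B)]\big\|_{\bS_p}\le\const\|\psi\|_{B_{\be,1}^1}\big\|[A,B]\big\|_{\bS_p}.
$$
More precisely, only the second-gender term survives:
$$
[\psi(A,B),A]=\upint\!\!\!\iint\frac{\psi(x,y_1)-\psi(x,y_2)}{y_1-y_2}\,dE_A(x)\,dE_B(y_1)[B,A]\,dE_B(y_2),
$$
and the bound follows from \rf{vtorod} together with the estimate $\|\dg^{[2]}\psi\|_{L^\be\otimes^{\rm h}L^\be\otimes_{\rm h}L^\be}\le\const\|\psi\|_{B_{\be,1}^1}$. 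Symmetrically, taking $Q=B$ leaves only the first-gender term, and \rf{perrod} gives $\|[B,\psi(A,B)]\|_{\bS_p}\le\const\|\psi\|_{B_{\be,1}^1}\|[A,B]\|_{\bS_p}$.

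With $[A,\psi(A,B)]$ and $[B,\psi(A,B)]$ now known to lie in $\bS_p$, Theorem~\ref{komutp} applied to $\f$ and $Q=\psi(A,B)$ yields exactly the stated representation of $[\f(A,B),\psi(A,B)]$ as a sum of a second-gender and a first-gender triple operator integral. It also yields
$$
\big\|[\f(A,B),\psi(A,B)]\big\|_{\bS_p}\le\const\|\f\|_{B_{\be,1}^1}\big(\|[A,\psi(A,B)]\|_{\bS_p}+\|[B,\psi(A,B)]\|_{\bS_p}\big).
$$
Inserting the two bounds from the previous step gives the inequality claimed in the theorem.

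The main obstacle is the self-adjointness setting. Theorem~\ref{komutp} requires $Q$ to be bounded, so the step $Q=A$ is directly legitimate only for bounded $A$ and $B$. If unbounded $A$ and $B$ are allowed, I would first try to avoid $Q=A$ altogether. The idea is to obtain the formula for $[\psi(A,B),A]$ directly, by the same argument as in the proof of Theorem~\ref{komutp} (following Theorem 4.2 of \cite{AP2}), working on a suitable domain. If that is too delicate, I would fall back on approximating $A$ and $B$ by bounded truncations with control of the commutator. A second point to verify is that the commutator in Theorem~\ref{komutp} is formed with the operators $A$ and $B$ appearing as arguments of $\f(A,B)$, so applying it to $\psi$ with $Q=A$ is consistent. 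This is immediate from the statement as given.
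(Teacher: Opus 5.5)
Your proposal is correct and follows essentially the same route as the paper, which defers to the proof of Theorem~4.2 of \cite{AP2}. That route applies Theorem~\ref{komutp} with $Q=A$ and $Q=B$ to obtain $\|[A,\psi(A,B)]\|_{\bS_p},\ \|[B,\psi(A,B)]\|_{\bS_p}\le\const\|\psi\|_{B_{\be,1}^1}\|[A,B]\|_{\bS_p}$, and then applies Theorem~\ref{komutp} again with $Q=\psi(A,B)$. Your worry about unbounded operators is unnecessary: the paper works throughout with bounded self-adjoint $A$ and $B$ (see the end of the introduction), so $Q=A$ and $Q=B$ are legitimate choices.
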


The proof of Theorem \ref{glavp} can be obtained by following the line of the proof of Theorem 4.2 of \cite{AP2} and applying inequalities \rf{perrod} and \rf{vtorod}.

\

\section{\bf No commutator Lipschitz estimates in $\bS_p$ in the case $\bs{p>2}$}
\setcounter{equation}{0}

\

The main results of this section show that the result of 
\S\;\ref{Gosha} 
do 
not 
generalize 
to the case $p>2$.

\begin{thm}
\label{Sp}
Suppose that $p>2$. Then there is no positive number $K$ such that 
$$
 \big\|[\f(A,B),R]\big\|_{\bS_p}\le K\|\f\|_{B_{\be,1}^1(\R^2)}\max\{\|[A,R]\|_{\bS_p},\|[B,R]\|_{\bS_p}\}
 $$
 whenever $\f\in B_{\be,1}^1(\R^2)$, 
$R$ is a bounded linear operator, and $A$ and $B$ are finite rank self-adjoint operators on Hilbert space.
\end{thm}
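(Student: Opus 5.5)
Assume, toward a contradiction, that such a constant $K$ exists; the plan is to derive from it the Lipschitz inequality \rf{LipSp} in $\bS_p$ for finite rank operators, which \cite{ANP} shows fails for $p>2$. The device is the standard $2\times2$ block-matrix trick that turns differences into commutators. Take finite rank self-adjoint $A_1,A_2,B_1,B_2$ on $\h$ and put, on $\h\oplus\h$,
$$
A=\left(\begin{matrix}A_1&0\\0&A_2\end{matrix}\right),\quad
B=\left(\begin{matrix}B_1&0\\0&B_2\end{matrix}\right),\quad
R=\left(\begin{matrix}0&I\\0&0\end{matrix}\right).
$$
Then $A$ and $B$ are finite rank and self-adjoint, and $R$ is bounded with $\|R\|=1$.

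The first step is to check that $\f(A,B)$ is block diagonal. Since $E_A$ and $E_B$ are block diagonal, with blocks $E_{A_i}$ and $E_{B_i}$, we get $\f(A,B)=\f(A_1,B_1)\oplus\f(A_2,B_2)$. For finite rank operators this is immediate, because the double operator integral is then a finite sum $\sum\f(x,y)E_A(\{x\})E_B(\{y\})$. The second step is the computation $[A,R]=AR-RA$, whose only nonzero entry is the $(1,2)$ entry $A_1-A_2$. Likewise $[B,R]$ has $(1,2)$ entry $B_1-B_2$, and $[\f(A,B),R]$ has $(1,2)$ entry $\f(A_1,B_1)-\f(A_2,B_2)$. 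All $\bS_p$ norms of these commutators equal the norms of the corresponding off-diagonal entries.

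Substituting into the assumed inequality gives
$$
\|\f(A_1,B_1)-\f(A_2,B_2)\|_{\bS_p}\le K\|\f\|_{B_{\be,1}^1}\max\{\|A_1-A_2\|_{\bS_p},\|B_1-B_2\|_{\bS_p}\}
$$
for all finite rank self-adjoint $A_i,B_i$. This is \rf{LipSp} restricted to finite rank operators.

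The main obstacle is making sure the counterexample in \cite{ANP} that refutes \rf{LipSp} for $p>2$ can be taken with finite rank (finite-dimensional) operators and a uniform $\f$-normalization. If it is stated only for bounded operators, I would reduce to matrices first. One route is compressing to finite-dimensional subspaces, using that $\bS_p$ norms of compressions approximate the full norms. The cleaner route is to note that the proof in \cite{ANP} builds matrices $A_N,B_N$ in dimension $N$ and functions $\f_N$ with ratio $\to\infty$. That is exactly what is needed to contradict the existence of $K$.
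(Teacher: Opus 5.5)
Your proof is correct and coincides with the paper's: the block-matrix construction with $A=A_1\oplus A_2$, $B=B_1\oplus B_2$ and $R=\left(\begin{matrix}0&I\\0&0\end{matrix}\right)$ is exactly Lemma \ref{sved}, which the paper then combines with the negative result of \S\,8 of \cite{ANP}. The obstacle you anticipate does not arise, because that result is already stated for finite rank self-adjoint $A_1,A_2,B$ (the special case $B_1=B_2=B$ of your derived inequality), so no reduction to matrices is needed. The compression route you mention would not work as stated anyway, since compressing $A$ and $B$ does not compress $\f(A,B)$.
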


The same is true in the operator norm

\begin{thm}
\label{opno}
There is no positive number $K$ such that 
$$
 \big\|[\f(A,B),R]\big\|\le K\|\f\|_{B_{\be,1}^1(\R^2)}\max\{\|[A,R]\|,\|[B,R]\|\}
 $$
 whenever $\f\in B_{\be,1}^1(\R^2)$, 
$R$ is a bounded linear operator, and $A$ and $B$ are finite rank self-adjoint operators on Hilbert space.
\end{thm}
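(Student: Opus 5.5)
Theorem \ref{opno} will be reduced to the failure of the Lipschitz estimate \rf{LipSp} in the operator norm, which was established in \cite{ANP}. The tool is the standard $2\times2$ block-matrix trick that turns differences $\f(A_1,B_1)-\f(A_2,B_2)$ into commutators.

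Suppose, to the contrary, that such a $K$ exists. Take finite rank self-adjoint operators $A_1,B_1,A_2,B_2$ on a Hilbert space $\h$, and set
$$
A=\left(\begin{matrix}A_1&0\\0&A_2\end{matrix}\right),\quad
B=\left(\begin{matrix}B_1&0\\0&B_2\end{matrix}\right),\quad
R=\left(\begin{matrix}0&I\\0&0\end{matrix}\right).
$$
These act on $\h\oplus\h$. To keep $A$ and $B$ of finite rank when $\h$ is infinite dimensional, I will instead work in finite dimensions from the start. The counterexamples of \cite{ANP} can be taken finite dimensional: they are families of finite matrices with unbounded ratios, which is exactly what the statement ``finite rank'' allows.

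The first step is to compute the functional calculus of block-diagonal operators. The spectral measures of $A$ and $B$ are block diagonal, $E_A=E_{A_1}\oplus E_{A_2}$ and $E_B=E_{B_1}\oplus E_{B_2}$. Hence the double operator integral defining $\f(A,B)$ splits as $\f(A,B)=\f(A_1,B_1)\oplus\f(A_2,B_2)$. In finite dimensions this is immediate from the formula $\f(A,B)=\sum_{j,k}\f(\lambda_j,\mu_k)P_jQ_k$ with the spectral projections $P_j$ and $Q_k$.

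The second step is to compute the commutators. For block diagonal $X=X_1\oplus X_2$ we have $[X,R]=\left(\begin{matrix}0&X_1-X_2\\0&0\end{matrix}\right)$, so $\|[X,R]\|=\|X_1-X_2\|$. Applying the assumed inequality therefore gives
$$
\|\f(A_1,B_1)-\f(A_2,B_2)\|\le K\|\f\|_{B_{\be,1}^1}\max\{\|A_1-A_2\|,\|B_1-B_2\|\}
$$
for all finite rank (finite matrix) self-adjoint $A_j,B_j$ and all $\f\in B_{\be,1}^1(\R^2)$.

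The third step, and the main point to check, is that this contradicts \cite{ANP}. There it is shown that \rf{LipSp} fails in the operator norm; the construction produces finite matrices $A_1,A_2,B_1,B_2$ and functions $\f$ (with compactly supported Fourier transform, normalized in $B_{\be,1}^1$) for which the ratio is unbounded. If the published counterexample used infinite rank operators, I would compress to finite dimensional subspaces. This is legitimate because the norm of an operator is the supremum over compressions to finite dimensional subspaces, and the functional calculus can be approximated via spectral truncation. Such an approximation argument is the only potential obstacle. I expect it to be avoidable, since the constructions in \cite{ANP} are matrix constructions.
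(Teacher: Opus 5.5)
Your proposal is correct and takes essentially the same route as the paper. The paper uses the same $2\times2$ block construction with $R=\left(\begin{smallmatrix}0&I\\0&0\end{smallmatrix}\right)$ (Lemma~\ref{sved}) to turn the commutator estimate into the operator-norm Lipschitz estimate, and contradicts it with the result of \S\,8 of \cite{ANP}. Your finite-rank hedging and the proposed compression fallback are unnecessary: $A_1\oplus A_2$ has finite rank whenever $A_1$ and $A_2$ do, and the cited result of \cite{ANP} is already stated for finite rank self-adjoint $A_1,A_2,B$, which is the special case $B_1=B_2=B$ of your inequality.
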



 \begin{lem}
 \label{sved}
 Let $1\le p\le\be$. Suppose that there exists a positive number $K$ such that
 \bay
 \label{kuzya}
 \big\|[\f(A,B),R]\big\|_{\bS_p}\le K\|\f\|_{B_{\be,1}^1(\R^2)}\max\{\|[A,R]\|_{\bS_p},\|[B,R]\|_{\bS_p}\}
 \ey
whenever $\f\in B_{\be,1}^1(\R^2)$, 
$R$ is a bounded linear operator, and $A$ and $B$ are finite rank self-adjoint operators on a Hilbert space $\h$. 

Then
\bay
\label{zyuzya}
\|\f(A_1,B_1)-\f(A_2,B_2)\|_{\bS_p}\!\le 
K\|\f\|_{B_{\be,1}^1(\R^2)}\max\{\|A_1-B_1\|_{\bS_p},\|A_2-B_2\|_{\bS_p}\}
\ey
whenever $A_1$, $A_2$, $B_1$ and $B_2$ are finite rank self-adjoint operators.
 \end{lem}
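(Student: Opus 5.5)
Inequality \rf{zyuzya} as printed has $\max\{\|A_1-B_1\|_{\bS_p},\|A_2-B_2\|_{\bS_p}\}$ on the right, which is evidently a misprint for $\max\{\|A_1-A_2\|_{\bS_p},\|B_1-B_2\|_{\bS_p}\}$, the quantity in \rf{LipSp}. I will prove that version. The plan is the standard $2\times2$ block-matrix trick that converts commutator estimates into Lipschitz estimates. Given finite rank self-adjoint $A_1,A_2,B_1,B_2$ on $\h$, I work on $\h\oplus\h$ and set
$$
A=\begin{pmatrix}A_1&0\\0&A_2\end{pmatrix},\quad
B=\begin{pmatrix}B_1&0\\0&B_2\end{pmatrix},\quad
R=\begin{pmatrix}0&I\\0&0\end{pmatrix}.
$$
Then $A$ and $B$ are finite rank self-adjoint operators on the Hilbert space $\h\oplus\h$, and $R$ is bounded. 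Hypothesis \rf{kuzya} is assumed for finite rank operators on a Hilbert space; I apply it on $\h\oplus\h$, which is unitarily equivalent to $\h$ when $\h$ is infinite-dimensional. In the finite-dimensional case I embed into a larger space, which changes nothing since the operators are finite rank.

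The first step is to compute $\f(A,B)$. Since $A$ and $B$ are block diagonal, their spectral measures split: $E_A=E_{A_1}\oplus E_{A_2}$ and $E_B=E_{B_1}\oplus E_{B_2}$. For finite rank operators the double operator integral is a finite sum $\sum\f(\lambda,\mu)E_A(\{\lambda\})E_B(\{\mu\})$, so it is immediate that
$$
\f(A,B)=\begin{pmatrix}\f(A_1,B_1)&0\\0&\f(A_2,B_2)\end{pmatrix}.
$$
(If one prefers not to use the finite sum, one can check this directly from the Haagerup representation of $\f$ described above.)

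The second step is a direct multiplication. For any block diagonal operator $D=\operatorname{diag}(D_1,D_2)$,
$$
[D,R]=\begin{pmatrix}0&D_1-D_2\\0&0\end{pmatrix}.
$$
It follows that $\|[\f(A,B),R]\|_{\bS_p}=\|\f(A_1,B_1)-\f(A_2,B_2)\|_{\bS_p}$, that $\|[A,R]\|_{\bS_p}=\|A_1-A_2\|_{\bS_p}$, and that $\|[B,R]\|_{\bS_p}=\|B_1-B_2\|_{\bS_p}$. These equalities hold for every $p$, including the operator norm when $p=\be$, because a single off-diagonal block has the same singular values as the block itself.

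Substituting these equalities into \rf{kuzya} gives \rf{zyuzya} at once, with the same constant $K$. There is no genuine obstacle. The only points needing care are the identification of $\f(A,B)$ for block diagonal operators and the remark that the hypothesis may be applied on $\h\oplus\h$ rather than on $\h$.
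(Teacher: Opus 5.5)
Your proof is correct and is essentially the paper's argument: the same operators $A=\operatorname{diag}(A_1,A_2)$, $B=\operatorname{diag}(B_1,B_2)$ and $R=\left(\begin{matrix}0&I\\ 0&0\end{matrix}\right)$ on $\h\oplus\h$, with all three commutators reducing to the single off-diagonal blocks $\f(A_1,B_1)-\f(A_2,B_2)$, $A_1-A_2$ and $B_1-B_2$. You are also right that the right-hand side of the conclusion should read $\max\{\|A_1-A_2\|_{\bS_p},\|B_1-B_2\|_{\bS_p}\}$; that is exactly what the paper's computation produces, and it is the form used in the proofs of Theorems 4.1 and 4.2.
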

 
 Recall that in the case $p=\be$, by $\bS_p$ we understand the space of bounded linear operators with the operator norm.
 
 \Pf For self-adjoint operators $A_1$, $A_2$, $B_1$ and $B_2$ on $\h$, we consider the self-adjoint operators 
 $A$ and $B$ on $\h\oplus\h$ defined by 
 $$
 A=\left(\begin{matrix}A_1&\0\\[.2cm]\0&A_2\end{matrix}\right)\quad\mbox{and}\quad
 B=\left(\begin{matrix}B_1&\0\\[.2cm]\0&B_2\end{matrix}\right).
 $$
 We define now the bounded operator $R$ on $\h\oplus\h$ by
 $$
 R=\left(\begin{matrix}\0&I\\[.2cm]\0&\0\end{matrix}\right).
 $$
 Clearly,
 $$
 \f(A,B)R-R\f(A,B)=\left(\begin{matrix}\0&\f(A_1,B_1)-\f(A_2,B_2)\\[.2cm]\0&\0\end{matrix}\right).
 $$
 On the other hand,
 $$
 AR-RA=\left(\begin{matrix}\0&A_1-A_2\\[.2cm]\0&\0\end{matrix}\right)\quad\mbox{and}\quad
 BR-RB=\left(\begin{matrix}\0&B_1-B_2\\[.2cm]\0&\0\end{matrix}\right).
 $$
 Clearly, inequality \rf{kuzya} implies inequality \rf{zyuzya}. $\bl$
 
 \medskip
  
 {\bf Proof of Theorems \ref{Sp} and \ref{opno}.} It is easy to see that Theorems \ref{Sp} and \ref{opno} follow immediately from Lemma \ref{sved} and the following result from \S\:8 of \cite{ANP}:
 
 {\it Let $2<p\le\be$.
Then there is no positive number $K$ such that 
$$
\|\f(A_1,B)-\f(A_2,B)\|_{\bS_p}\le K\|\f\|_{B^1_{{\be,1}}}\|A_1-A_2\|_{\bS_p}
$$ 
for all functions $\f$ on in the Besov class $B_{\be,1}^1(\R^2)$ and for all finite rank self-adjoint operators $A_1,\,A_2,\,B$.} $\bl$

\
 
\begin{footnotesize}
 
\noindent
\begin{tabular}{p{8cm}p{15cm}}
St.Petersburg State University  \\
Universitetskaya nab., 7/9  \\
199034 St.Petersburg, Russia  \\
\\

St.Petersburg Department \\
Steklov Institute of Mathematics   \\
Russian Academy of Sciences   \\
Fontanka 27, 191023 St.Petersburg,
Russia\\
email: email: peller@math.msu.edu
\end{tabular}
\end{footnotesize}


\begin{thebibliography}{99}

\bibitem[ANP]{ANP} {\sc A.B. Aleksandrov, F.L. Nazarov} and
{\sc V.V. Peller}, {\em Functions of noncommuting self-adjoint operators under perturbation and estimates of triple operator integrals}, Advances in Math.
{\bf295} (2016), 1--52.


\bibitem[AP1]{AP1}{\sc Aleksandrov A. B. , Peller V. V.}, {\it Operator Lipschitz functions}, Uspekhi Matem. Nauk. {\bf71:4} (2016), 3--106 (Russian).
English transl.: Russian Math. Surveys, {\bf71:4} (2016), 605--702.


\bibitem[AP2]{AP2}  {\sc A.B. Aleksandrov} and {\sc V.V. Peller},
{\em Functions of almost commuting operators and an extension of the Helton--Howe trace formula}, J. Funct. Anal. {\bf271} (2016), 3300--3322.


\bibitem[AP3]{AP3}  {\sc A.B. Aleksandrov} and {\sc V.V. Peller},
{\it Multiple operator integrals, Haagerup and Haagerup-like tensor products, and operator ideals},
Bull. London Math. Soc. {\bf49} (2017), 463--479.

\bibitem[AP4]{AP4} {\sc A.B. Aleksandrov} and {\sc V.V. Peller}, {\it Haagerup tensor products and Schur multipliers}, Algebra i Analiz {\bf36:5} (2024), 70--85.


\bibitem[BS1]{BS1} {\sc Birman M. S., Solomyak M. Z.}, 
{\em Double Stieltjes operator integrals}, 
Problems of Math. Phys.,
Leningrad. Univ. 1 (1966), 33--67 (Russian). English transl., Topics Math. Physics 1, 25--54 . Consultants
Bureau Plenum Publishing Corporation, New York, 1967.
\bibitem[BS2]{BS2} {\sc Birman M. S., Solomyak M. Z.}, 
{\em Double Stieltjes operator integrals. II}, Problems of Math. Phys.,
Leningrad. Univ. 2 (1967), 26--60 (Russian). English transl., Topics Math. Physics 2, 19--46. Consultants
Bureau Plenum Publishing Corporation, New York, 1968.
\bibitem[BS3]{BS3} {\sc Birman M. S., Solomyak M. Z.}, 
{\em Double Stieltjes operator integrals. III},
Problems of Math. Phys. {\bf6}, Leningrad. Univ, 27--53 (1973) (Russian).
\bibitem[GK]{GK} I.C. Gohberg and M.G. Krein, {\it Introduction to the theory
of linear nonselfadjoint operators in Hilbert space,} Nauka, Moscow, 1965.
\bibitem[HH]{HH} {\sc J.W. Helton} and {\sc R. Howe},
{\em Integral operators, commutators, traces, index, and homology}, 
in ``Lecture Notes in Math.'', vol. {\bf345}, pp. 141--209,
Springer-Verlag, New York, 1973.
\bibitem[JTT]{JTT} {\sc K. Juschenko, I.G. Todorov} and {\sc L. Turowska}, {\em Multidimensional operator multipliers}, Trans. Amer. Math. Soc. {\bf361}
(2009), 4683--4720.
\bibitem[Pe1]{Pe1} {\sc Peller V. V.}, 
{\em Hankel operators in the perturbation theory of unitary and self-adjoint
operators}, Funk, anal. i ego pril. {\bf19}:2  (1985),  37--51 (in Russian); English transl.,
Funct. Anal. Appl., {\bf19}:2 (1985), 111--123.

\bibitem[Pe2]{Pe2} {\sc V.V. Peller}, {\em Multiple operator integrals and higher operator
derivatives}, J. Funct. Anal.  {\bf233}  (2006),  515--544.

\bibitem[Pe3]{Pe3} {\sc V.V. Peller}, {\em Multiple operator integrals in perturbation theory}, Bull. Math. Sci. {\bf6} (2016), 15--88.

\bibitem[Pe4]{Pe4} {\sc V.V. Peller}, {\em Besov spaces in operator theory}, Russian Math. Surveys {\bf79:1}, 1--52. 

\bibitem[Pi]{Pi} {\sc J.D. Pincus}, {\em Commutators and systems of singular integral equations, I}, Acta Math. {\bf121} (1968), 219-249.

\end{thebibliography}
\end{document}